\DeclareSymbolFont{rsfscript}{OMS}{rsfs}{m}{n}
\DeclareSymbolFontAlphabet{\mathrsfs}{rsfscript}
\DeclareMathOperator{\Aut}{Aut}
\begin{document}
\mainmatter              
\title{Block-groups and Hall relations\thanks{Supported by the Ministry of Science and Higher Education of the Russian Federation (Ural Mathematical Center project No. 075-02-2020-1537/1)}}
\titlerunning{Block-groups and Hall relations}
\author{Azza M. Gaysin\inst{1} \and Mikhail V. Volkov\inst{2}}
\authorrunning{Azza M. Gaysin, Mikhail V. Volkov} 
\tocauthor{Azza M. Gaysin, Mikhail V. Volkov}
\institute{Department of Algebra, Faculty of Mathematics and Physics, Charles University, Sokolovska 83, 186 00 Praha 8, Czech Republic\\
\email{azza.gaysin@gmail.com}
\and
Chair of Algebra and Theoretical Computer Science, Institute of Natural Sciences and Mathematics, Ural Federal University, Lenina 51, Ekaterinburg 620000, Russia\\
\email{m.v.volkov@urfu.ru},\\
WWW home page: \url{http://csseminar.kmath.ru/volkov/}
}

\maketitle              

\begin{abstract}
A binary relation on a finite set is called a Hall relation if it contains a permutation of the set. Under the usual relational product, Hall relations form a semigroup which is known to be a block-group, that is,
a semigroup with at most one idempotent in each $\mathrsfs{R}$-class and each $\mathrsfs{L}$-class. Here we show that in a certain sense, the converse is true: every finite block-group divides a semigroup of Hall relations on a finite set.
\keywords{Hall relation, reflexive relation, $\mathrsfs{J}$-trivial semigroup, block-group, power semigroup, semidirect product, semigroup division}
\end{abstract}
\section{Background and Motivation: Straubing's Theorem}
\label{sec:intro}
The result that we are going to present is inspired by Straubing's representation theorem for $\mathrsfs{J}$-trivial monoids~\cite{Straubing:1980}. Straubing's theorem involves three notions: $\mathrsfs{J}$-trivial semigroups, monoids of reflexive relations, and semigroup division. For the reader's convenience, we recall their definitions.

Given a semigroup $S$, we denote by $S^1$ the least monoid containing $S$, that is, $S^1:=S$ if $S$ has an identity element and  $S^1:=S\cup\{1\}$ if $S$ has no identity element; in the latter case the multiplication in $S$ is extended to $S^1$ in a unique way such that the fresh symbol $1$ becomes the identity element in~$S^1$. Green \cite{Green:1951} defined five important equivalencies on every semigroup $S$, collectively referred to as \emph{Green's relations}, of which we meet the following three in this note:
\begin{itemize}
\item[] $x\,\mathrsfs{R}\,y \Leftrightarrow xS^1 = yS^1$, i.e., $x$ and $y$ generate the same right ideal;
\item[] $x\,\mathrsfs{L}\,y \Leftrightarrow S^1x = S^1y$, i.e., $x$ and $y$ generate the same left ideal;
\item[] $x\,\mathrsfs{J}\,y \Leftrightarrow S^1xS^1 = S^1yS^1$, i.e., $x$ and $y$ generate the same ideal.
\end{itemize}
Basic information about $\mathrsfs{R}$, $\mathrsfs{L}$, and  $\mathrsfs{J}$ can be found in the early chapters of any general semigroup theory text such as, e.g., \cite{Clifford&Preston:1961,Howie:1995}, but actually this note uses only the above definitions of these three relations.

A semigroup $S$ is said to be $\mathrsfs{J}$-\emph{triv\-i\-al} if the relation $\mathrsfs{J}$ on $S$ coincides with the equality relation $\Delta_S$ on $S$. In other words, this means that the following implication holds for all $x,y\in S$:
\[
S^1xS^1=S^1yS^1\to x=y.
\]

Let $X$ be a set. Recall that binary relations on $X$ are multiplied as follows: for $\rho,\sigma\subseteq X\times X$, their product is set to be the relation
\[
\rho\sigma:=\{(x,y)\in X\times X\mid \exists z\in X\ (x,z)\in\rho\mathrel{\&}(z,y)\in\sigma\}.
\]
This multiplication is associative and $\Delta_X$, the equality relation on $X$, serves as the identity element for it. Thus, the binary relations on $X$ constitute a monoid. Also, it is easy to check that the multiplication is compatible with inclusions between relations: if $\rho\subseteq\rho'$ and $\sigma\subseteq\sigma'$, then $\rho\sigma\subseteq\rho'\sigma'$.

A binary relation $\rho$ on $X$ is \emph{reflexive} if $\rho$ contains $\Delta_X$. The above observations immediately imply that the reflexive relations on $X$ form a submonoid in the monoid of all binary relations on $X$.
Let $\mathscr{R}_n$ denote the monoid of all reflexive binary relations on a set with $n$ elements. This monoid can be conveniently thought of as a submonoid of the monoid of all $n\times n$ matrices (with the usual
matrix multiplication) over the Boolean semiring $\{0,1\}$, with the operations $+$ and $\cdot$ on $\{0,1\}$ being defined by the rules:
$$0\cdot 0=0\cdot 1=1\cdot 0=0+0=0,\qquad 1\cdot 1=1+0=0+1=1+1=1.$$
Namely, $\mathscr{R}_n$ can be identified with the submonoid consisting of matrices in which all diagonal entries are~1.

Finally, a semigroup $S$ is said to \emph{divide} another semigroup $T$ if $S$ is a homomorphic image of a subsemigroup in $T$. Now we are in a position to formulate Straubing's theorem.

\begin{theorem}[Straubing~\cite{Straubing:1980}]\label{thm:straubing}
A finite semigroup $S$ is $\mathrsfs{J}$-trivial if and only if $S$ divides the monoid $\mathscr{R}_n$ for some $n$.
\end{theorem}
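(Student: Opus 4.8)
I would prove the two implications separately; the ``if'' part is soft, and essentially all the content sits in the ``only if'' part.

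\emph{The ``if'' direction.} I would first show that $\mathscr{R}_n$ is itself $\mathrsfs{J}$-trivial and then that $\mathrsfs{J}$-triviality is inherited by divisors. For the first claim the decisive feature is reflexivity: if $\rho,\sigma\in\mathscr{R}_n$ then $\Delta\subseteq\rho$ and $\Delta\subseteq\sigma$, so by compatibility of the product with inclusions $\alpha\rho\beta\supseteq\Delta\rho\Delta=\rho$ for all $\alpha,\beta\in\mathscr{R}_n$; that is, multiplying a reflexive relation on either side can only enlarge it. As $\mathscr{R}_n$ is a monoid, $\rho\,\mathrsfs{J}\,\sigma$ means $\sigma=\alpha\rho\beta$ and $\rho=\gamma\sigma\delta$ for suitable relations, whence $\rho\subseteq\sigma\subseteq\rho$ and $\rho=\sigma$. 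That division preserves $\mathrsfs{J}$-triviality is immediate for subsemigroups, since for $x,y$ in a subsemigroup $U$ of $T$ one has $U^1xU^1\subseteq T^1xT^1$; closure under homomorphic images is the only subtle point and genuinely needs finiteness, so I would invoke the standard fact that the finite $\mathrsfs{J}$-trivial semigroups are closed under homomorphic images (they form a pseudovariety). Finiteness really is needed: $(\mathbb{N},+)$ is $\mathrsfs{J}$-trivial yet maps onto the non-$\mathrsfs{J}$-trivial group $\mathbb{Z}/2\mathbb{Z}$.

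\emph{The ``only if'' direction.} Given a finite $\mathrsfs{J}$-trivial $S$, I would pass to the monoid $M:=S^1$, which is again $\mathrsfs{J}$-trivial and which $S$ divides; as division is transitive it then suffices to embed $M$ into $\mathscr{R}_n$ with $n=|M|$. Write $u\le v$ for the $\mathrsfs{J}$-order, $u\in M^1vM^1$; since $M$ is $\mathrsfs{J}$-trivial this is a genuine partial order, $1$ is its greatest element, and $xm\le x$ holds for all $x,m$. On the set $M$ define, for each $m\in M$, the relation
\[
\phi(m):=\{(x,y)\in M\times M\mid xm\le y\}.
\]
Every $\phi(m)$ is reflexive because $xm\le x$, and $\phi$ is injective because the row of $\phi(m)$ at $x=1$ is the filter $\{y\mid m\le y\}$, whose least element is $m$. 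The point is that $\phi$ is a homomorphism: the inclusion $\phi(mm')\subseteq\phi(m)\phi(m')$ holds using the witness $y=xm$, while conversely, if $xm\le y$ and $ym'\le z$, then $xmm'\le ym'\le z$, so $(x,z)\in\phi(mm')$. Hence $\phi$ is an injective semigroup homomorphism realizing $M$ as a subsemigroup of $\mathscr{R}_n$ (its identity being the idempotent $\phi(1)$, usually not $\Delta$), and therefore $S$ divides $\mathscr{R}_n$.

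\emph{Where the work is.} The reverse inclusion above rests on the implication $xm\le y\Rightarrow xmm'\le ym'$, i.e.\ on right translations being order-preserving, equivalently on the $\mathrsfs{J}$-order being \emph{stable} under multiplication. This is the crux. It is a real constraint — the one-sided Green orders are only one-sidedly stable (for instance $\le_{\mathrsfs{R}}$ is compatible with left but not with right multiplication) — and it is precisely here that $\mathrsfs{J}$-triviality and finiteness enter. I would obtain stability as follows. First, it passes to quotients: if $a\le b$ in a quotient, lift a factorization $a=ubv$, apply stability in the cover to get $ubvc\le bc$, and project. So it is enough to prove stability in some finite $\mathrsfs{J}$-trivial monoid covering $M$, for which I would take a relatively free one; there stability is transparent through the scattered-subword description, where each element is recorded by the set $\mathrm{sub}(\cdot)$ of short subwords it contains, $\mathrm{sub}(xy)=\mathrm{sub}(x)\,\mathrm{sub}(y)$, and $u\le v$ is reverse inclusion of these sets — manifestly compatible with products. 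Equivalently, one may simply quote the characterization of finite $\mathrsfs{J}$-trivial monoids as those carrying a stable order with $1$ on top. I expect this stability lemma, rather than the relational bookkeeping, to be the genuine difficulty.
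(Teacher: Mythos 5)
Your ``if'' half is essentially right: $\mathscr{R}_n$ is $\mathrsfs{J}$-trivial by exactly the enlargement argument you give, and closure of finite $\mathrsfs{J}$-trivial semigroups under division is a standard pseudovariety fact. The ``only if'' half has a fatal gap, sitting exactly where you located the crux: the $\mathrsfs{J}$-order of a finite $\mathrsfs{J}$-trivial monoid is \emph{not} stable under multiplication, so your map $\phi$ is in general not a homomorphism, and no argument can supply the missing stability, because it is false. Counterexample: let $M$ consist of all words of length at most $3$ over $\{a,b\}$ together with a zero, the product of two words being their concatenation if it has length at most $3$ and zero otherwise. This $M$ is $\mathrsfs{J}$-trivial (compare lengths), and $ba\le b$ in the $\mathrsfs{J}$-order because $ba=1\cdot b\cdot a$; yet $(ba)b=bab\not\le bb=b\cdot b$, since $M^1(bb)M^1=\{bb,abb,bba,bbb,0\}$ does not contain $bab$. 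Concretely, $(b,b)\in\phi(a)$ and $(b,bb)\in\phi(b)$, but $(b,bb)\notin\phi(ab)$, so $\phi(a)\phi(b)\nsubseteq\phi(ab)$. Your proposed repair fails on both counts. First, the $\mathrsfs{J}$-order on $A^*/\sim_k$ is \emph{strictly finer} than reverse inclusion of subword sets: in $A^*/\sim_4$ one has $\mathrm{sub}(bab)\supseteq\mathrm{sub}(bb)$, yet no word $p\,bb\,q$ is $\sim_4$-equivalent to $bab$ (an $a$ occurring in $p$ creates the subword $abb$, an $a$ in $q$ creates $bba$, and neither is a subword of $bab$); hence $[bab]\not\le_{\mathrsfs{J}}[bb]$ although $[ba]\le_{\mathrsfs{J}}[b]$, so the cover's $\mathrsfs{J}$-order is unstable as well --- indeed your (correct) lemma that stability descends along onto homomorphisms, applied to the quotient $A^*/\sim_4\to M$ above, already forces this. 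Second, the claim that $M$ is covered by ``a relatively free'' finite $\mathrsfs{J}$-trivial monoid is not a routine appeal: the pseudovariety of finite $\mathrsfs{J}$-trivial monoids is not locally finite, and the statement that every finite $\mathrsfs{J}$-trivial monoid is a quotient of some $A^*/\sim_k$ is precisely the hard direction of Simon's theorem.

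What survives is genuinely useful: your construction $\phi(m)=\{(x,y)\mid xm\le y\}$ works verbatim for any finite monoid equipped with a \emph{stable} partial order whose maximum is the identity, and reverse subword inclusion is such an order on $A^*/\sim_k$. The correct argument therefore runs: cover $M$ by $N=A^*/\sim_k$ (Simon's theorem), embed $N$ into $\mathscr{R}_{|N|}$ via $\phi$ built from the subword order rather than the $\mathrsfs{J}$-order, and conclude that $M$, being a quotient of $N$, \emph{divides} $\mathscr{R}_{|N|}$ --- this is why the theorem asserts division rather than embedding, and it is in essence Straubing's own proof. Note that the paper does not prove Theorem~\ref{thm:straubing} at all: it cites Straubing, and emphasizes that the result is equivalent to Simon's theorem and that no purely algebraic proof is known. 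A write-up like yours, which renders the deep step as transparent bookkeeping, would contradict that state of affairs if it were correct; the counterexample above shows exactly where it collapses.
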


\begin{remark}
In~\cite{Straubing:1980}, the above result is stated for $S$ being a monoid; this makes no essential difference since $S$ is $\mathrsfs{J}$-trivial if and only if so is $S^1$.
\end{remark}

Theorem~\ref{thm:straubing} looks quite innocent as it is stated in purely semigroup-theoretic terms and very much resembles textbook representation results such as the Cayley-type representation of arbitrary semigroups by transformations or binary relations. However, no direct semigroup-theoretic proof of Theorem~\ref{thm:straubing} is known. The proof in~\cite{Straubing:1980} crucially depends on Simon's theorem~\cite{Simon:1972,Simon:1975}, a deep combinatorial result in the theory of recognizable languages. Moreover, it can be shown relatively easily that Theorem~\ref{thm:straubing} and Simon's theorem are equivalent, and therefore, a~direct proof of the former would provide a new algebraic proof of the latter. In the literature, there are many proofs of Simon's theorem, based of different techniques, but none of the proofs are purely algebraic.

In the present note, we provide a representation by binary relations for another, larger class of finite semigroups, namely, the class of all finite block-groups. We introduce and briefly discuss this class in Section~\ref{sec:block-groups}, while in Section~\ref{sec:hall} we present a family $\{\mathscr{H}_n\}$ of monoids of binary relations consisting of so-called Hall relations. Our main result, which is stated and proved in Section~\ref{sec:main}, shows that the family $\{\mathscr{H}_n\}$ plays for block-groups precisely the same role as the family $\{\mathscr{R}_n\}$ plays for $\mathrsfs{J}$-trivial semigroups.

\section{Block-Groups and Power Semigroups of Groups}
\label{sec:block-groups}

Recall that an element $e$ of a semigroup $S$ is said to be an \emph{idempotent} if $e^2=e$. A \emph{block-group} is a finite semigroup with at most one idempotent in each $\mathrsfs{R}$-class and each $\mathrsfs{L}$-class.
This definition can be expressed by the following implications:
\begin{gather}
ef=e^2=e\mathrel{\&}fe=f^2=f\to e=f,\label{eq:bg1}\\
ef=f^2=f\mathrel{\&}fe=e^2=e\to e=f.\label{eq:bg2}
\end{gather}
Indeed, \eqref{eq:bg1} and respectively \eqref{eq:bg2} express the facts that any $\mathrsfs{R}$-related (respectively, $\mathrsfs{L}$-related) idempotents coincide.

We refer the reader to Pin's enthusiastic survey~\cite{Pin:1995} for an explanation of the name ``block-group''. The survey presents also remarkable and profound connections between block-groups and the theory of recognizable languages, especially its topological aspects. An unexpected connection of block-groups to computational complexity theory has been established in~\cite{Bulatov:2002}.

While the ``external'' connections just mentioned are of definite importance and interest, the present note is entirely ``internal'' with respect to the algebraic theory of block-groups. We need two results of this theory. The first one, due to Margolis and Pin~\cite{Margolis&Pin:1984}, relates block-groups to $\mathrsfs{J}$-trivial semigroups.
\begin{proposition}[\cite{Margolis&Pin:1984}, Proposition 2.3]\label{prop:margolis&pin}
A finite semigroup $S$ is a block-group if and only if the idempotents of $S$ generate a $\mathrsfs{J}$-trivial subsemigroup in $S$.
\end{proposition}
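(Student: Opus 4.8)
The plan is to prove Proposition~\ref{prop:margolis&pin} by analysing the interaction between the idempotents of $S$ and the defining implications \eqref{eq:bg1} and \eqref{eq:bg2} of a block-group. Let $E$ denote the set of idempotents of $S$ and let $\langle E\rangle$ be the subsemigroup it generates. Since $\mathrsfs{J}$-triviality is equivalent to the equality of $\mathrsfs{J}$ with $\Delta$, and since idempotents furnish the most tractable elements for computing principal ideals, I would aim to show: $S$ satisfies \eqref{eq:bg1} and \eqref{eq:bg2} if and only if no two distinct idempotents of $S$ are $\mathrsfs{J}$-related inside $\langle E\rangle$. The key observation is that in a finite semigroup every $\mathrsfs{J}$-class of $\langle E\rangle$ that is a whole subsemigroup (or at least every $\mathrsfs{J}$-class containing an idempotent) is governed by the classical fact that, for idempotents, $\mathrsfs{J}$-equivalence in a finite semigroup forces the existence of idempotents that are $\mathrsfs{R}$- or $\mathrsfs{L}$-related, which is exactly what \eqref{eq:bg1} and \eqref{eq:bg2} forbid.

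For the forward direction, suppose $S$ is a block-group; I want $\langle E\rangle$ to be $\mathrsfs{J}$-trivial. The natural strategy is to show that distinct idempotents cannot be $\mathrsfs{J}$-equivalent in $\langle E\rangle$, and then to leverage finiteness to conclude that the whole of $\langle E\rangle$ is $\mathrsfs{J}$-trivial. For the first part I would take $e,f\in E$ with $e\,\mathrsfs{J}\,f$ in $\langle E\rangle$ and use the standard structure of finite $\mathrsfs{J}$-classes: because the ambient semigroup is finite, $\mathrsfs{J}=\mathrsfs{D}$, so $e$ and $f$ lie in a common $\mathrsfs{D}$-class, and two idempotents in a $\mathrsfs{D}$-class are connected by an element that is $\mathrsfs{R}$-related to one and $\mathrsfs{L}$-related to the other. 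This produces, via standard idempotent manipulations, a pair of idempotents that are $\mathrsfs{R}$- or $\mathrsfs{L}$-related; the block-group hypothesis then forces $e=f$. The second part — passing from ``distinct idempotents are not $\mathrsfs{J}$-related'' to ``$\langle E\rangle$ is $\mathrsfs{J}$-trivial'' — uses the fact that a finite semigroup in which distinct idempotents are never $\mathrsfs{J}$-related is automatically $\mathrsfs{J}$-trivial, because every $\mathrsfs{J}$-class of a finite semigroup that is regular contains an idempotent, and in $\langle E\rangle$ every element is a product of idempotents, so regularity of the relevant $\mathrsfs{J}$-classes can be controlled.

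For the converse, suppose $\langle E\rangle$ is $\mathrsfs{J}$-trivial; I must verify \eqref{eq:bg1} and \eqref{eq:bg2}. Take idempotents $e,f\in E$ satisfying the hypothesis of \eqref{eq:bg1}, namely $ef=e$ and $fe=f$. Then $e$ and $f$ are $\mathrsfs{R}$-related in $S$, but I only know $\mathrsfs{J}$-triviality of $\langle E\rangle$, so the point is that $ef=e$ and $fe=f$ immediately give $S^1eS^1 = S^1fS^1$ computed inside $\langle E\rangle$ (both $e$ and $f$ lie in $\langle E\rangle$ and each divides the other), whence $e\,\mathrsfs{J}\,f$ in $\langle E\rangle$ and $\mathrsfs{J}$-triviality yields $e=f$. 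The argument for \eqref{eq:bg2} is symmetric, exchanging the roles of left and right.

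The main obstacle I anticipate is the forward direction, specifically the step establishing that distinct idempotents of a block-group cannot be $\mathrsfs{J}$-related within $\langle E\rangle$ and then upgrading this to full $\mathrsfs{J}$-triviality. The subtlety is that $\mathrsfs{J}$-equivalence in $\langle E\rangle$ is computed with respect to ideals of $\langle E\rangle$, not of $S$, so I must be careful that the $\mathrsfs{R}$- and $\mathrsfs{L}$-relations I extract genuinely live among idempotents and that the block-group implications \eqref{eq:bg1}–\eqref{eq:bg2}, which are statements about $S$, apply. I expect that the cleanest route is to reduce everything to the classical lemma that in any finite semigroup two $\mathrsfs{D}$-equivalent idempotents $e,f$ satisfy: there exist idempotents $e',f'$ with $e\,\mathrsfs{R}\,e'\,\mathrsfs{L}\,f$ and $e\,\mathrsfs{L}\,f'\,\mathrsfs{R}\,f$, so that a single application of the block-group condition collapses the $\mathrsfs{D}$-class to a single idempotent; combined with the fact that each regular $\mathrsfs{J}$-class of a finite semigroup contains an idempotent, this pins down $\mathrsfs{J}$-triviality of $\langle E\rangle$.
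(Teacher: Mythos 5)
First, a point of order: the paper does not prove this proposition at all --- it imports it verbatim from Margolis and Pin's article --- so your attempt can only be judged on its own merits, not against an in-paper argument. Your ``if'' direction is correct: from $ef=e$ and $fe=f$ one gets that $e$ and $f$ divide each other inside $\langle E\rangle$, hence are $\mathrsfs{J}$-related in $\langle E\rangle$, hence equal by $\mathrsfs{J}$-triviality; the argument for \eqref{eq:bg2} is symmetric. The ``only if'' direction, however, rests on two claims that are false as stated. The ``classical lemma'' you invoke --- that $\mathrsfs{D}$-equivalent idempotents $e,f$ of a finite semigroup admit idempotents $e',f'$ with $e\,\mathrsfs{R}\,e'\,\mathrsfs{L}\,f$ and $e\,\mathrsfs{L}\,f'\,\mathrsfs{R}\,f$ --- fails already in the five-element Brandt semigroup $B_2$ (the $2\times 2$ matrix units together with zero): $e_{11}$ and $e_{22}$ are $\mathrsfs{D}$-related, but the only element $\mathrsfs{R}$-related to $e_{11}$ and $\mathrsfs{L}$-related to $e_{22}$ is $e_{12}$, which is not idempotent. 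Worse, $B_2$ is itself a block-group containing two distinct $\mathrsfs{D}$-related idempotents, so any argument deriving ``distinct idempotents of a block-group are never $\mathrsfs{D}$-related'' from the block-group axioms must be wrong. What you actually need (and what the proposition asserts) is that distinct idempotents are not $\mathrsfs{D}$-related \emph{inside} $\langle E\rangle$; you correctly flag this subtlety, but there the connecting element is a product of idempotents rather than an idempotent, and converting such a product into an $\mathrsfs{R}$- or $\mathrsfs{L}$-related pair of idempotents is precisely the hard core of Margolis and Pin's proof (it can be done, for instance, via FitzGerald's lemma that a regular product of $n$ idempotents has an inverse that is again a product of $n$ idempotents). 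Your phrase ``standard idempotent manipulations'' is a placeholder for exactly this missing content.

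The second gap is the upgrading step. Even granting that distinct idempotents of $\langle E\rangle$ are never $\mathrsfs{J}$-related in $\langle E\rangle$, your principle that ``a finite semigroup in which distinct idempotents are never $\mathrsfs{J}$-related is automatically $\mathrsfs{J}$-trivial'' is false: any nontrivial finite group satisfies the hypothesis and violates the conclusion. Having at most one idempotent per regular $\mathrsfs{D}$-class only forces each regular $\mathrsfs{D}$-class to consist of a single group $\mathrsfs{H}$-class; to conclude $\mathrsfs{J}$-triviality you must additionally show that $\langle E\rangle$ is aperiodic (contains no nontrivial subgroups) and that its non-regular $\mathrsfs{D}$-classes are singletons. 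Neither follows from ``every element is a product of idempotents'': the semigroup of all singular transformations of a finite set is idempotent-generated yet contains nontrivial subgroups. So both halves of your reduction in the hard direction --- collapsing $\mathrsfs{J}$-related idempotents and then promoting that to full $\mathrsfs{J}$-triviality --- require genuine arguments that the proposal does not supply.
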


\begin{remark}
In~\cite{Margolis&Pin:1984}, the above result is stated for $S$ being a monoid; this makes no essential difference since $S$ is a block-group if and only if so is $S^1$. The same remark applies also to Theorem~\ref{thm:bg=pg} below, which also was originally stated for the case of monoids.
\end{remark}

Given a semigroup $S$, we denote by $\mathcal{P}(S)$ the set of all its non-empty subsets. One introduces an associative  multiplication on $\mathcal{P}(S)$ as follows: the product of subsets $A,B\in\mathcal{P}(S)$ is the subset
\[
AB:=\{ab\mid a\in A,\ b\in B\}.
\]
Then $\mathcal{P}(S)$ becomes a semigroup which is called the \emph{power semigroup of $S$}.

A jewel of the theory of block-groups is their characterization in terms of power semigroups of groups. This deep and difficult result is due to Henckell and Rhodes~\cite{Henckell:1991:BG=PG}, see also~\cite{Henckell:1991:typeII} for a detailed explanation and \cite{Steinberg:2000,Auinger&Steinberg:2005} for modern and shorter (but still complicated) proofs.

\begin{theorem}
\label{thm:bg=pg}
A finite semigroup $S$ is a block-group if and only if $S$ divides the power semigroup of some finite group.
\end{theorem}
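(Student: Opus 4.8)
The plan is to prove the two implications by quite different means: the \emph{if} direction is elementary, while the \emph{only if} direction is the genuinely deep half, which I would have to route through heavy external machinery. For the \emph{if} direction I would first check that $\mathcal{P}(G)$ is itself a block-group for every finite group $G$, and then appeal to the standard fact (see \cite{Pin:1995}) that block-groups are closed under division and finite direct products, so that any divisor of $\mathcal{P}(G)$ inherits the property. The key observation is that a non-empty subset $A\subseteq G$ satisfies $AA=A$ exactly when $A$ is a subgroup: from $AA=A$ one gets $A^n=A$ for all $n$, so every power of an element of $A$ lies in $A$, finiteness forces the identity $e_G$ into $A$, and then $A$ is closed under inverses and products; the converse is obvious. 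Thus the idempotents of $\mathcal{P}(G)$ are precisely the subgroups of $G$. If two subgroups $H,K$ satisfy one of the hypotheses in \eqref{eq:bg1}--\eqref{eq:bg2}, say $HK=K$ and $KH=H$, then, using $e_G\in H\cap K$, we get $H=He_G\subseteq HK=K$ and $K=Ke_G\subseteq KH=H$, whence $H=K$; the other hypothesis is symmetric. Hence $\mathcal{P}(G)$ has at most one idempotent in each $\mathrsfs{R}$-class and each $\mathrsfs{L}$-class.

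For the converse, which is the hard half, I would translate the requirement ``$S$ divides some $\mathcal{P}(G)$'' into the language of relational morphisms into finite groups. Call a subset $X\subseteq S$ a $\mathbf{G}$-\emph{pointlike} set if every relational morphism from $S$ to a finite group carries $X$ into a single element; such subsets are closed under the multiplication of $\mathcal{P}(S)$, and whether $S$ lies in the pseudovariety generated by the $\mathcal{P}(G)$ is tied to their structure and, dually, to the $\mathbf{G}$-\emph{kernel} of $S$ -- the set of elements carried to the identity by \emph{every} relational morphism to a group. Proposition~\ref{prop:margolis&pin} tells us that a block-group is exactly a finite semigroup whose idempotent-generated subsemigroup is $\mathrsfs{J}$-trivial, and I would use this to try to build, for a given block-group $S$, a single finite group $G$ and a relational morphism $\tau\colon S\to G$ whose point-preimages $\tau^{-1}(g)$ are separating enough that the assignment $s\mapsto\tau(s)$ exhibits $S$ as a divisor of $\mathcal{P}(G)$.

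The construction of that group and the verification that $\tau$ separates $S$ is where all the difficulty lies, and I would not expect to avoid it. It amounts to understanding the $\mathbf{G}$-kernel of a block-group precisely enough, and determining the $\mathbf{G}$-kernel of an arbitrary finite semigroup is exactly the Rhodes type-II problem. Its solution -- by Ash, and independently through the Ribes--Zalesskii theorem on the profinite topology of finitely generated free groups -- is a deep result of combinatorial and profinite group theory; the reduction of the present inclusion to it is due to Henckell and Rhodes \cite{Henckell:1991:BG=PG,Henckell:1991:typeII}, with a streamlined geometric treatment in \cite{Auinger&Steinberg:2005}. As is the case with Straubing's theorem (Theorem~\ref{thm:straubing}), no elementary or purely semigroup-theoretic argument is known, so any honest proof I could offer would have to invoke this external machinery rather than sidestep it.
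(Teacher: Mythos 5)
Your proposal is correct and matches the paper's treatment of this statement: the paper itself gives no proof of Theorem~\ref{thm:bg=pg}, deferring the hard (``only if'') direction to Henckell--Rhodes \cite{Henckell:1991:BG=PG} (resting ultimately on Ash's type-II theorem), exactly as you do, and citing \cite[Proposition~2.4]{Pin:1980} for the easy (``if'') direction. Your self-contained verification of the ``if'' half---identifying the idempotents of $\mathcal{P}(G)$ with the subgroups of $G$ and checking \eqref{eq:bg1}--\eqref{eq:bg2} via $e_G\in H\cap K$, plus closure of block-groups under division---is correct and is the only part where you supply more detail than the paper does.
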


Here we make a comment similar to that made after Theorem~\ref{thm:straubing}: even though the formulation of Theorem~\ref{thm:bg=pg} is purely semigroup-theoretic, all its proofs in the literature employ tools from outside algebra.

\section{Hall relations}
\label{sec:hall}
\setcounter{footnote}{0}

A binary relation $\rho\subseteq X\times X$ on a finite set $X$ is called a \emph{Hall relation} if $\rho$ contains a permutation of $X$. Here we treat permutations as binary relations, that is, given a permutation $\pi\colon X\to X$, we identify it with the relation $\{(x,x\pi)\mid x\in X\}$.

The name ``Hall relation'' was coined by Schwarz~\cite{Schwarz:1972,Schwarz:1973} with the reference to the classic marriage theorem by Hall~\cite{Hall:1935}. Indeed, Hall's theorem deals with perfect matchings in bipartite graphs, and if one represents binary relations on a finite set as bipartite graphs, Hall relations are precisely those whose graphs admit a perfect matching. In the representation of binary relations as matrices over the Boolean semiring, Hall relations correspond to matrices with permanent~1.

The product of two permutations considered as relations on $X$ coincides with their usual product in the group of all permutations on $X$. If $\rho,\rho'$ are Hall relations and $\pi,\pi'$ are permutations such that $\pi\subseteq\rho$ and $\pi'\subseteq\rho'$, the product $\rho\rho'$ contains the permutation $\pi\pi'$ whence $\rho\rho'$ is a Hall relation again. Since $\Delta_X$, the equality relation on $X$, is a Hall relation, the Hall relations on $X$ form a submonoid in the monoid of all binary relations on $X$. Let $\mathscr{H}_n$ denote the monoid of all Hall relations on the set $X_n:=\{1,2,\dots,n\}$. Clearly, $\mathscr{H}_n$ contains both the monoid $\mathscr{R}_n$ of all reflexive relations on $X_n$ and the group $\mathbb{S}_n$ of all permutations on $X_n$.

The monoid $\mathscr{H}_n$ turns out to be a block-group. This property of $\mathscr{H}_n$ can be extracted from results announced by Ki Hang Kim~\cite{Butler:1974}\footnote{This paper was published under the name Kim Ki-hang Butler; see the biography of Ki Hang Kim~\cite{Boyle:2013} for an explanation.}. The argument outlined in~\cite{Butler:1974} is of counting nature: the author exhibits a recursive formula for the number of idempotents in $\mathscr{H}_n$~\cite[Theorem~15]{Butler:1974}, and then he claims that the number coincides with both the number of $\mathrsfs{L}$-classes that contain idempotents and the number of $\mathrsfs{R}$-classes that contain idempotents~\cite[Corollary~17]{Butler:1974}. The research announcement~\cite{Butler:1974} contained no proofs, nor we found any proofs of claims made therein in later publications that dealt with monoids of Hall relations. Therefore, we include here a short counting-free argument.

\begin{proposition}\label{prop:kim}
The monoid $\mathscr{H}_n$  is a block-group.
\end{proposition}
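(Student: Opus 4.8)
The plan is to show directly that $\mathscr{H}_n$ satisfies implications \eqref{eq:bg1} and \eqref{eq:bg2}. So let $e,f\in\mathscr{H}_n$ be idempotent relations (i.e. $e^2=e$, $f^2=f$) and suppose they are $\mathrsfs{R}$-related; I must deduce $e=f$. Since each is a Hall relation, each contains a permutation: pick permutations $\pi\subseteq e$ and $\sigma\subseteq f$. The key structural fact I would exploit is that an \emph{idempotent} Hall relation must actually be \emph{reflexive}. Indeed, if $\pi\subseteq e$ and $e=e^2=e^3=\cdots$, then every power $\pi^k\subseteq e$; taking $k$ to be the order of $\pi$ in $\mathbb{S}_n$ gives $\Delta_X=\pi^k\subseteq e$, so $e\supseteq\Delta_X$ is reflexive. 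The same argument shows $f$ is reflexive. Thus every idempotent of $\mathscr{H}_n$ already lies in the submonoid $\mathscr{R}_n$ of reflexive relations.

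**Next I would** transfer the problem into $\mathscr{R}_n$. A reflexive relation $\rho$ that is idempotent is exactly a preorder (reflexive and transitive): reflexivity is given, and $\rho^2=\rho$ says precisely that $\rho$ is transitive (the inclusion $\rho\subseteq\rho^2$ being automatic from reflexivity). So the idempotents of $\mathscr{H}_n$ are precisely the preorders on $X_n$. Now I want to understand the Green relations $\mathrsfs{R}$ and $\mathrsfs{L}$ restricted to these idempotents. The plan is to argue that $\mathrsfs{R}$-equivalence (or $\mathrsfs{L}$-equivalence) of two preorders, computed in the full monoid $\mathscr{H}_n$, forces them to coincide. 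Because $e\,\mathrsfs{R}\,f$ means $e\mathscr{H}_n^1=f\mathscr{H}_n^1$, there exist relations $u,v\in\mathscr{H}_n^1$ with $eu=f$ and $fv=e$. Combined with idempotency $ee=e$ and $ff=f$, this should pin down each of $e,f$ as the least reflexive relation in its right ideal, or equivalently let me show $e\subseteq f$ and $f\subseteq e$ by chasing the inclusions.

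**The main obstacle** I anticipate is the inclusion-chasing step: from $eu=f$ and $ff=f$ I need to extract $f\subseteq e$ (and symmetrically). The clean way is to observe that for any relation $u$, the product $eu$ satisfies $eu\supseteq u$ is \emph{not} generally true, so I cannot argue naively; instead I should use that $e$ is reflexive, giving $u=\Delta_X u\subseteq eu=f$, hence $u\subseteq f$, and then $f=eu\subseteq ef$; symmetrically $e\subseteq ef$ as well, and using idempotency together with $ef,fe$ one reduces to comparing $e$ and $f$ directly. For the $\mathrsfs{R}$-case the cleanest route is: from $e\,\mathrsfs{R}\,f$ and reflexivity get $f=eu\supseteq \Delta_X u=u$ and $e=fv\supseteq v$, so $f=eu\subseteq ef$ and $e=fv\subseteq fe$; then idempotency of the preorders lets me show $ef=f$ and $fe=e$ using transitivity, which is exactly hypothesis \eqref{eq:bg1}, delivering $e=f$. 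The $\mathrsfs{L}$-case \eqref{eq:bg2} is handled by the left-right dual argument (replacing right ideals by left ideals and $\Delta_X u$ by $u\Delta_X$). I expect the subtle point to be making sure the witnesses $u,v$ can be taken to be Hall relations and that the reflexivity of $e,f$ is genuinely used to convert the ideal equalities into the set-inclusions $e\subseteq f$ and $f\subseteq e$; once reflexivity is in hand, the rest is a short order-theoretic argument about preorders.
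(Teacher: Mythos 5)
Your proposal is correct, and its second half takes a genuinely different route from the paper's. The first half is identical: both arguments hinge on the observation that an idempotent $\rho\in\mathscr{H}_n$ contains a permutation $\pi$, and since $\rho=\rho^k\supseteq\pi^k=\Delta_{X_n}$ for $k$ the order of $\pi$, every idempotent of $\mathscr{H}_n$ is reflexive. At that point the paper stops computing: all idempotents lie in $\mathscr{R}_n$, which is $\mathrsfs{J}$-trivial, so the idempotents generate a $\mathrsfs{J}$-trivial subsemigroup, and Proposition~\ref{prop:margolis&pin} (Margolis--Pin) immediately yields that $\mathscr{H}_n$ is a block-group. You instead verify the block-group condition by hand: if $e,f$ are idempotents with $e\,\mathrsfs{R}\,f$, write $f=eu$ and $e=fv$; idempotency alone gives $ef=e(eu)=eu=f$ and $fe=f(fv)=fv=e$, and then reflexivity together with monotonicity of the relational product gives $e=e\Delta_{X_n}\subseteq ef=f$ and $f=f\Delta_{X_n}\subseteq fe=e$, whence $e=f$; the $\mathrsfs{L}$-case is dual. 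Your route is more self-contained---it needs nothing beyond the definitions, in effect re-running for idempotents the same monotonicity argument that makes $\mathscr{R}_n$ $\mathrsfs{J}$-trivial---while the paper's route is shorter at the price of invoking a nontrivial cited characterization.

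One spot in your write-up needs repair. Having derived $ef=f$ and $fe=e$ (incidentally, that is the hypothesis of \eqref{eq:bg2} as displayed in the paper, not of \eqref{eq:bg1}), you say this is ``exactly hypothesis \eqref{eq:bg1}, delivering $e=f$.'' As stated, that is circular: the implications \eqref{eq:bg1} and \eqref{eq:bg2} are precisely what you are trying to prove, so reaching their hypotheses delivers nothing by itself. The non-circular finish is the inclusion chase above---$e\subseteq ef=f$ and $f\subseteq fe=e$, both inclusions coming from reflexivity of the other factor---and you had already derived every ingredient of it, so what is missing is one line of assembly rather than an idea; but that line should be written, and the appeal to \eqref{eq:bg1} should be deleted.
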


\begin{proof}
Let $\rho$ be an idempotent from $\mathscr{H}_n$ and $\pi$ a permutation contained in $\rho$. There exists a positive integer $k$ such that $\pi^k=\Delta_{X_n}$. Since $\rho^2=\rho$, we have $\rho=\rho^k\supseteq\pi^k$, whence  $\rho\supseteq\Delta_{X_n}$. Thus, $\rho$ is reflexive, and we have shown that every idempotent of $\mathscr{H}_n$ lies in $\mathscr{R}_n$. The latter monoid is $\mathrsfs{J}$-trivial, and hence, $\mathscr{H}_n$  is a block-group by Proposition~\ref{prop:margolis&pin}.\qed
\end{proof}

\section{Representation Theorem}
\label{sec:main}

\begin{theorem}\label{thm:main}
A finite semigroup $S$ is a block-group if and only if $S$ divides the monoid $\mathscr{H}_n$ for some $n$.
\end{theorem}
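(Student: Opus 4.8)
The plan is to reduce both implications to the Henckell--Rhodes characterization of block-groups (Theorem~\ref{thm:bg=pg}) by exploiting that division is transitive. Recall that transitivity means: if $S$ divides $T$ and $T$ divides $U$, then $S$ divides $U$. This holds because the preimage of a subsemigroup under a homomorphism is again a subsemigroup, so a homomorphic image of a subsemigroup of a homomorphic image of a subsemigroup of $U$ is again a homomorphic image of a subsemigroup of $U$. Granting this, the only genuinely new ingredient is the following lemma: for every finite group $G$ of order $n$, the power semigroup $\mathcal{P}(G)$ embeds as a submonoid into $\mathscr{H}_n$.

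First I would prove this lemma by a Cayley-type construction. Identify the underlying set $X_n$ with $G$, and to each non-empty subset $A\subseteq G$ assign the relation $\lambda_A:=\{(x,y)\in G\times G\mid x^{-1}y\in A\}$, equivalently $\lambda_A=\{(x,xa)\mid x\in G,\ a\in A\}$. Since $A$ is non-empty, fixing any $a_0\in A$ shows that the right translation $x\mapsto xa_0$, viewed as a relation, is a permutation of $G$ contained in $\lambda_A$; hence $\lambda_A$ is a Hall relation and $\lambda_A\in\mathscr{H}_n$. A direct computation gives $\lambda_A\lambda_B=\lambda_{AB}$ for all $A,B\in\mathcal{P}(G)$: a pair $(x,z)$ lies in the product exactly when $x^{-1}y\in A$ and $y^{-1}z\in B$ for some $y$, which happens precisely when $x^{-1}z\in AB$. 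Thus $A\mapsto\lambda_A$ is a homomorphism; it sends the identity $\{1\}$ of $\mathcal{P}(G)$ to $\Delta_G$, and it is injective because $A$ is recovered from $\lambda_A$ as the set $\{y\mid (1,y)\in\lambda_A\}$. This establishes the lemma.

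With the lemma in hand, both directions are immediate. If $S$ is a block-group, then by Theorem~\ref{thm:bg=pg} it divides $\mathcal{P}(G)$ for some finite group $G$; since $\mathcal{P}(G)$ embeds into $\mathscr{H}_{|G|}$, transitivity of division yields that $S$ divides $\mathscr{H}_{|G|}$. Conversely, if $S$ divides $\mathscr{H}_n$ for some $n$, then since $\mathscr{H}_n$ is a block-group by Proposition~\ref{prop:kim}, Theorem~\ref{thm:bg=pg} gives that $\mathscr{H}_n$ divides $\mathcal{P}(G)$ for some finite group $G$; transitivity then shows that $S$ divides $\mathcal{P}(G)$, and a final appeal to Theorem~\ref{thm:bg=pg} confirms that $S$ is a block-group. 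The main obstacle is concentrated in the lemma, and more precisely in recognizing that the regular representation of subsets of a group always lands inside the Hall relations; the genuinely hard work is otherwise delegated to Theorem~\ref{thm:bg=pg}, which we take as given.
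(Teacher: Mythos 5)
Your proposal is correct and takes essentially the same approach as the paper: the heart of both arguments is the embedding of $\mathcal{P}(G)$ into $\mathscr{H}_{|G|}$ via $A\mapsto\{(x,y)\mid x^{-1}y\in A\}$ (your $\lambda_A$ is exactly the paper's $\rho_A$), combined with Theorem~\ref{thm:bg=pg} and transitivity of division. The only difference is cosmetic and concerns the easy direction: the paper deduces the ``if'' part from Proposition~\ref{prop:kim} plus the known closure of block-groups under division, whereas you recover that closure in this instance by applying Theorem~\ref{thm:bg=pg} in both directions; both routes are valid.
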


\begin{proof}
The class of all block-groups is known to be closed under division (see, e.g., \cite{Margolis&Pin:1984}). Therefore the ``if'' part immediately follows from Proposition~\ref{prop:kim}.

For the ``only if'' part, we employ Theorem~\ref{thm:bg=pg}. Choose a finite group $G$ such that $S$ divides $\mathcal{P}(G)$ and let $n:=|G|$. It is sufficient to show that the semigroup $\mathcal{P}(G)$ embeds into the monoid $\mathscr{H}_n$. In order to simplify notation, we identify $G$ and $X_n$ as sets. Now, for each non-empty subset $A\in\mathcal{P}(G)$, define a binary relation $\rho_A$ as follows:
\[
\rho_A:=\{(g,h)\in G\times G\mid g^{-1}h\in A\}.
\]
Fix an element $a\in A$. By the definition, $\rho_A$ contains all pairs $(g,ga)$, where $g$ runs over $G$. As the relation $\{(g,ga)\mid g\in G\}$ is a permutation of $G$, we see that $\rho_A$ is a Hall relation. Thus, the map $f\colon A\mapsto\rho_A$ sends $\mathcal{P}(G)$ into $\mathscr{H}_n$.

We aim to show that $f\colon\mathcal{P}(G)\to\mathscr{H}_n$ is an embedding of semigroups. To see that $f$ is one-to-one, take any two different subsets $A,B\in\mathcal{P}(G)$. Without any loss, we may assume that $A\nsubseteq B$. If $a\in A\setminus B$, the pair $(e,a)$, where $e$ is the identity element of the group $G$, belongs to $\rho_A$ but not to $\rho_B$. Thus, $\rho_A\ne\rho_B$.

It remains to verify that $f$ is a homomorphism, that is, $\rho_A\rho_B=\rho_{AB}$ for arbitrary subsets $A,B\in\mathcal{P}(G)$. If $(x,y)\in\rho_A\rho_B$, there must exist an element $z$ such that $(x,z)\in\rho_A$ and $(z,y)\in\rho_B$. By the definition, we have $x^{-1}z\in A$ and $z^{-1}y\in B$, whence $x^{-1}y=x^{-1}z\cdot z^{-1}y\in AB$. We see that $(x,y)\in\rho_{AB}$. Thus, $\rho_A\rho_B\subseteq\rho_{AB}$.

To prove the opposite inclusion, take $(g,h)\in\rho_{AB}$. Then $g^{-1}h\in AB$, that is, $g^{-1}h=ab$ for some $a\in A$ and $b\in B$. We see that $g^{-1}hb^{-1}=a$, whence $(g,hb^{-1})\in\rho_A$, while $(hb^{-1})^{-1}h=b$, whence $(hb^{-1},h)\in\rho_B$. Therefore, we get $(g,h)\in\rho_A\rho_B$, as required.\qed
\end{proof}

As the above proof shows, Theorem~\ref{thm:main} is rather a straightforward consequence of Henckell and Rhodes's theorem (Theorem~\ref{thm:bg=pg}). After the formulation of Straubing's theorem (Theorem~\ref{thm:straubing}) in Section~\ref{sec:intro}, we said that it is more than a consequence of Simon's theorem: Theorem~\ref{thm:straubing} is in fact equivalent to Simon's theorem whence a direct algebraic proof of the former would provide a new algebraic proof of the latter. Could the same be said about the relationship between Theorem~\ref{thm:main} and Henckell and Rhodes's theorem?

To address this question, we need the concept of the semidirect product of a monoid with a group. Let $M$ be a monoid, $G$ a group, $\Aut{M}$ the automorphism group of $M$, and $\alpha\colon G\to\Aut{M}$ a group homomorphism. For $m\in M$ and $g\in G$ we write $gm$ for the image of $m$ under the automorphism $g\alpha$ (so that we assume that automorphisms act on the left). The \emph{semidirect product} $M\rtimes G$ with respect to $\alpha$ is the set $M\times G$ equipped with the following multiplication: for all $m,m'\in M$, $g,g'\in G$,
\[
(m,g)(m',g'):=\left(m(gm'),gg'\right).
\]
The multiplication is easily seen to be associative so that $M\rtimes G$ is a semigroup.

The following result was first proved by Margolis and Pin~\cite[Propositions~3.6 and 3.7]{Margolis&Pin:1984} by language-theoretical tools. Pin~\cite{Pin:1995} asked for its purely semigroup-theoretic proof. Such a proof was then published by Auinger and Steinberg~\cite{Auinger&Steinberg:2005}.

\begin{proposition}\label{prop:semidirect}
Every semidirect product of a finite $\mathrsfs{J}$-trivial monoid with a finite group divides the power semigroup of some finite group.
\end{proposition}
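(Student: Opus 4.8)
A short proof is in fact available from the results already quoted; write $e$ for the identity of $G$ and $1$ for that of $M$. From the multiplication rule $(m,g)(m',g')=(m(gm'),gg')$ one sees at once that $(m,g)$ is idempotent if and only if $g^2=g$ and $m(gm)=m$, that is, if and only if $g=e$ and $m^2=m$. Hence the idempotents of $M\rtimes G$ are precisely the pairs $(f,e)$ with $f$ an idempotent of $M$, and since $(f,e)(f',e)=(ff',e)$ they generate a subsemigroup isomorphic to the subsemigroup of $M$ generated by its own idempotents. The latter is a subsemigroup of the $\mathrsfs{J}$-trivial monoid $M$ and therefore itself $\mathrsfs{J}$-trivial, so Proposition~\ref{prop:margolis&pin} shows that $M\rtimes G$ is a block-group, and Theorem~\ref{thm:bg=pg} completes the argument. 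This derivation, however, deduces the proposition \emph{from} Henckell and Rhodes's theorem; since the interest of the proposition lies in providing an ingredient \emph{towards} an independent proof of that theorem, I would really want an argument that does not invoke it, which I outline next.

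The independent route rests on two facts. The first is an elementary closure property: if a finite group $G$ acts by automorphisms on a finite group $H$, then $(A,g)\mapsto A\times\{g\}$ embeds $\mathcal{P}(H)\rtimes G$, with the action of $G$ on $\mathcal{P}(H)$ induced from its action on $H$, into $\mathcal{P}(H\rtimes G)$; that this map respects both products is a direct computation. One checks, moreover, that a $G$-equivariant division of $M$ into $\mathcal{P}(H)$ lifts to a division of $M\rtimes G$ into $\mathcal{P}(H)\rtimes G$, so it would suffice to produce a finite group $H$ with a $G$-action by automorphisms admitting a $G$-equivariant division of $M$ into $\mathcal{P}(H)$. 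The second fact lets one avoid matching the given action $\alpha$ on the nose: it is standard that $M\rtimes G$ embeds into, hence divides, the wreath product $M\wr G=M^G\rtimes G$, in which $G$ acts on $M^G$ merely by permuting the $G$-indexed coordinates via its regular action. Since permuting coordinates is a group automorphism of $H_0^G$ for any finite group $H_0$, applying the base case below coordinatewise and using the evident embedding of $\mathcal{P}(H_0)^G$ into $\mathcal{P}(H_0^G)$ produces a division of $M^G$ into $\mathcal{P}(H_0^G)$ that commutes with coordinate permutations, i.e. a $G$-equivariant division for exactly the action carried by the wreath product. Assembling the pieces yields $M\rtimes G$ dividing $\mathcal{P}(H_0^G\rtimes G)=\mathcal{P}(H_0\wr G)$, so one may take $K=H_0\wr G$.

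The entire weight then falls on the base case: every finite $\mathrsfs{J}$-trivial monoid divides the power semigroup of some finite group. By Straubing's theorem (Theorem~\ref{thm:straubing}) it is enough to treat $M=\mathscr{R}_n$, that is, to show that the monoid of reflexive relations on an $n$-element set divides $\mathcal{P}(H)$ for a suitable finite group $H$. This is nothing but the $\mathrsfs{J}$-trivial instance of Henckell and Rhodes's theorem, and I expect it to be the genuine obstacle: the idempotents of $\mathcal{P}(H)$ are exactly the subgroups of $H$, and coercing products of subsets of a group to imitate relational composition of reflexive relations is precisely the phenomenon that the language-theoretic proof of Margolis and Pin and the profinite proof of Auinger and Steinberg were built to handle. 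I would not anticipate a short, self-contained argument for this kernel; what the plan above contributes is a reduction of the full proposition to it by entirely formal means.
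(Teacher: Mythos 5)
The paper itself contains no proof of this proposition: it is quoted as an external result, first proved by Margolis and Pin with language-theoretic tools and later given a purely semigroup-theoretic proof by Auinger and Steinberg. So the comparison here is with those cited proofs and with the role the proposition plays in the paper. Your first derivation is mathematically correct: since $g^2=g$ forces $g=e$ in a group and $e$ acts as the identity automorphism, the idempotents of $M\rtimes G$ are exactly the pairs $(f,e)$ with $f^2=f$; they multiply as $(f,e)(f',e)=(ff',e)$, hence generate a copy of the idempotent-generated subsemigroup of $M$, which is $\mathrsfs{J}$-trivial because $\mathrsfs{J}$-triviality is inherited by subsemigroups (if $T^1xT^1=T^1yT^1$ then $S^1xS^1=S^1yS^1$). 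Proposition~\ref{prop:margolis&pin} then makes $M\rtimes G$ a block-group, and Theorem~\ref{thm:bg=pg} finishes. But, as you yourself diagnose, this proof is unusable in context: the whole point of the proposition in Section~\ref{sec:main} is that, \emph{given a proof of it independent of Henckell and Rhodes's theorem} (which is what Auinger and Steinberg supplied), a direct algebraic proof of Theorem~\ref{thm:main} would yield a new algebraic proof of Theorem~\ref{thm:bg=pg}. Deriving the proposition from Theorem~\ref{thm:bg=pg} collapses that argument into a circle.

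Your ``independent route'' is a correct formal reduction, and the individual steps all check out: the embedding $M\rtimes G\hookrightarrow M\wr G=M^G\rtimes G$, the embedding $\mathcal{P}(H)\rtimes G\hookrightarrow\mathcal{P}(H\rtimes G)$ via $(A,g)\mapsto A\times\{g\}$, the lifting of a $G$-equivariant division to a division of semidirect products, and the equivariant embedding of $\mathcal{P}(H_0)^G$ into $\mathcal{P}(H_0^G)$. Together they reduce the proposition to its instance with trivial $G$: every finite $\mathrsfs{J}$-trivial monoid divides the power semigroup of some finite group. The genuine gap is that this base case is not a smaller problem---it is where the entire content of Margolis and Pin's result resides. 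Its known proofs are precisely the nonelementary ones you are trying to circumvent: the language-theoretic argument resting on Simon's theorem, and the algebraic machinery of Auinger and Steinberg (note that reducing to $M=\mathscr{R}_n$ via Theorem~\ref{thm:straubing} does not help, since Straubing's theorem itself is equivalent to Simon's theorem, as the paper stresses). So what you have established is the worthwhile but partial observation that the group action in the proposition is a formality and all difficulty lives in the $\mathrsfs{J}$-trivial case; as a proof of the statement, the proposal is incomplete, with the gap exactly where you flagged it.
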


Now we register a further property of monoids of Hall relations. As mentioned, the monoid $\mathscr{H}_n$ contains both the monoid $\mathscr{R}_n$ and the group $\mathbb{S}_n$. Observe that $\mathbb{S}_n$ acts on $\mathscr{R}_n$ by conjugation since the relation $\pi\rho\pi^{-1}$ is reflexive for every $\rho\in\mathscr{R}_n$ and every $\pi\in\mathbb{S}_n$. This defines a group homomorphism $\mathbb{S}_n\to\Aut\mathscr{R}_n$ that gives rise to the semidirect product $\mathscr{R}_n\rtimes\mathbb{S}_n$.

\begin{proposition}\label{prop:semidirectHn}
The monoid $\mathscr{H}_n$  is a homomorphic image of the semidirect product $\mathscr{R}_n\rtimes\mathbb{S}_n$.
\end{proposition}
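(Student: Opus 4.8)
The plan is to exhibit the required surjection explicitly rather than argue abstractly. I would define a map
\[
\Phi\colon\mathscr{R}_n\rtimes\mathbb{S}_n\to\mathscr{H}_n,\qquad \Phi(\rho,\pi):=\rho\pi,
\]
where $\rho\pi$ denotes the relational product of the reflexive relation $\rho$ with the permutation $\pi$. This guess is motivated by the structural observation that every Hall relation factors as ``reflexive relation followed by permutation'', which is also precisely the fact that will drive surjectivity.

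First I would check that $\Phi$ indeed takes values in $\mathscr{H}_n$. Since $\rho\supseteq\Delta_{X_n}$ and relational multiplication is monotone with respect to inclusion, we get $\rho\pi\supseteq\Delta_{X_n}\pi=\pi$; as $\pi$ is a permutation, $\rho\pi$ is a Hall relation. Next I would verify that $\Phi$ is a homomorphism. Recall that the action of $\pi\in\mathbb{S}_n$ on $\mathscr{R}_n$ is conjugation, so the semidirect-product multiplication reads $(\rho,\pi)(\rho',\pi')=(\rho\,\pi\rho'\pi^{-1},\,\pi\pi')$. Applying $\Phi$ and using associativity of relational multiplication together with $\pi^{-1}\pi=\Delta_{X_n}$, the middle factor telescopes:
\[
\Phi\bigl((\rho,\pi)(\rho',\pi')\bigr)=\rho\,\pi\rho'\pi^{-1}\,\pi\pi'=\rho\pi\rho'\pi'=(\rho\pi)(\rho'\pi')=\Phi(\rho,\pi)\,\Phi(\rho',\pi').
\]

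For surjectivity, take an arbitrary Hall relation $\sigma\in\mathscr{H}_n$ and choose a permutation $\pi\subseteq\sigma$ (one exists by the definition of a Hall relation). Put $\rho:=\sigma\pi^{-1}$. Then $\rho\supseteq\pi\pi^{-1}=\Delta_{X_n}$, so $\rho$ is reflexive, i.e.\ $\rho\in\mathscr{R}_n$, and $\Phi(\rho,\pi)=\rho\pi=\sigma\pi^{-1}\pi=\sigma$. Hence $\Phi$ is onto $\mathscr{H}_n$, completing the argument.

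All the calculations are short, so there is no heavy computation to grind through. The two points that genuinely require care — and which I regard as the crux — are, first, matching the conjugation convention in the semidirect-product multiplication carefully enough that the cancellation $\pi^{-1}\pi=\Delta_{X_n}$ produces exactly $\rho\pi\rho'\pi'$ and nothing spurious; and second, the structural fact underlying surjectivity that every Hall relation $\sigma$ admits a factorisation $\sigma=\rho\pi$ with $\rho$ reflexive and $\pi$ a permutation. The latter is immediate once one observes that $\sigma\pi^{-1}$ is forced to be reflexive whenever $\pi\subseteq\sigma$, which is the same elementary idea already used in the proof of Proposition~\ref{prop:kim}.
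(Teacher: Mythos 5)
Your proof is correct and is essentially identical to the paper's own argument: the same map $(\rho,\pi)\mapsto\rho\pi$, the same verification that its image lies in $\mathscr{H}_n$, the same telescoping computation for the homomorphism property, and the same factorisation $\sigma=(\sigma\pi^{-1})\pi$ for surjectivity. Nothing to add.
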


\begin{proof}
Define a map $f$ on $\mathscr{R}_n\rtimes\mathbb{S}_n$ by $(\rho,\pi)f:=\rho\pi$ for every pair $(\rho,\pi)\in\mathscr{R}_n\times\mathbb{S}_n$. Since $\rho$ contains the equality relation, the product $\rho\pi$ contains the permutation~$\pi$ whence $\rho\pi$ is a Hall relation. Thus, the map $f$   sends $\mathscr{R}_n\rtimes\mathbb{S}_n$ into $\mathscr{H}_n$.

We aim to show that $f\colon\mathscr{R}_n\rtimes\mathbb{S}_n\to\mathscr{H}_n$ is an onto homomorphism. If $\sigma\in\mathscr{H}_n$ is an arbitrary Hall relation, take a permutation $\tau$ such that $\tau\subseteq\sigma$ and consider the relation $\sigma\tau^{-1}$. Clearly, $\sigma\tau^{-1}$ is reflexive and $\left(\sigma\tau^{-1},\tau\right)f=\sigma\tau^{-1}\tau=\sigma$. Thus, the map $f$ is surjective.

It remains to verify that $f$ is a homomorphism. Taking any $\rho,\rho'\in\mathscr{R}_n$ and any $\pi,\pi'\in\mathbb{S}_n$, we see that
\begin{align*}
\left((\rho,\pi)(\rho',\pi')\right)f&=\left((\rho(\pi\rho'\pi^{-1}),\pi\pi')\right)f&&\text{by definition of semidirect product}\\
                                    &=\rho(\pi\rho'\pi^{-1})\pi\pi'&&\text{by definition of the map $f$}\\
                                    &=\rho\pi\rho'\pi'&&\\
                                    &=\left((\rho,\pi)\right)f\cdot\left(\rho',\pi')\right)f&&\text{by definition of the map $f$.\hspace*{1.18cm}\qed}
\end{align*}
\end{proof}

Now we can easily deduce the ``only if'' of Theorem~\ref{thm:bg=pg} from Theorem~\ref{thm:main}. (The ``if'' part of Theorem~\ref{thm:bg=pg} is immediately ensured by the fact that power semigroups of finite groups are block-groups---see, e.g., \cite[Proposition~2.4]{Pin:1980} for this fact.) Let $S$ be a block-group. Combining Theorem~\ref{thm:main} and Proposition~\ref{prop:semidirectHn}, we see that $S$ divides a semidirect product of a finite $\mathrsfs{J}$-trivial monoid with a finite group, while Proposition~\ref{prop:semidirect} tells us that any such product divides the power semigroup of another finite group. The division relation is transitive, whence $S$ divides the power semigroup of the latter group.

As mentioned, there exists a purely semigroup-theoretic proof of Proposition~\ref{prop:semidirect}. Therefore, a direct algebraic proof of Theorem~\ref{thm:main} would provide a new algebraic proof of Henckell and Rhodes's theorem. Thus, the relationship between our main result and Henckell and Rhodes's theorem is to a large extent parallel to that between Straubing's and Simon's theorems.

\smallskip

We conclude with reminding a longstanding open question concerning Hall relations \cite[Problem~13]{Kim:1982}: what is the cardinality of the monoid $\mathscr{H}_n$?

\end{document}